\newtheorem{theorem}{Theorem}[section]
\newtheorem{thm}[theorem]{Theorem}
\newtheorem{cor}[theorem]{Corollary}
\theoremstyle{remark}
\numberwithin{equation}{section}
\begin{document}

\title[Super connectivity  of lexicographic product graphs]{Super connectivity  of lexicographic product graphs}

\author[Kh~Kamyab]{Khalid Kamyab}
\address{Department of Mathematics,  Urmia University,  Urmia 57135, Iran}
\email{kh.kamyab@urmia.ac.ir}

\author[M.~Ghasemi]{Mohsen Ghasemi}
\address{Department of Mathematics, Urmia University,  Urmia  57135, Iran}
\email{m.ghasemi@urmia.ac.ir}

\author[R.~Varmazyar]{Rezvan Varmazyar}
\address{Department of Mathematics,  Khoy Branch, Islamic Azad University, Khoy, Iran}
\email{varmazyar@iaukhoy.ac.ir}


\thanks{}

\subjclass[2010]{Primary 05C40; Secondary 05C90 }

\date{\today}


\keywords{Lexicographic product, Super connectivity,
$k_{1}$-vertex-cut}

\begin{abstract}
For a graph $G$, $k(G)$ denotes its connectivity. A graph is super
connected if every minimum vertex-cut isolates a vertex. Also
$k_{1}$-connectivity of a connected graph is the minimum number of
vertices whose deletion gives a disconnected graph without isolated
vertices. This paper provides bounds for the super connectivity and
$k_{1}$-connectivity of the lexicographic product of two graphs.
\end{abstract}

\maketitle
\section{Introduction}
Let $G=(V(G), E(G))$ be a simple undirected graph without loops and
multiple edges. We follow Bondy and Murty [3] for terminologies and
notations not defined here.\\For $x\in V(G)$, the {\it neighborhood} of $x$
is $N_{G}(x)=\{y\in V(G) \mid xy\in E(G)\}$ and $d_{G}(x)=|N_{G}(x)|$.
If $d_{G}(x)=0$ then $x$ is called an {\it isolated vertex}. We denote the
set of isolated vertices of a graph $G$ by $V_0(G)$. The minimum
degree of $G$ is $\delta(G)={\rm min} \{d_{G}(x) \mid x\in V(G)\}$. The
complete graph with $n$ vertices is denoted by $K_{n}$.

 Let $S\subseteq V(G)$. $S$ is called a {\it vertex-cut} of $G$
if $G-S$ is disconnected or reduces $G$ to the trivial graph
$K_{1}$.

The {\it connectivity} of the graph $G$, $k(G)$, is defined as the minimum
cardinality $|S|$ where $S$ is a vertex-cut of $G$. Clearly,
$k(G)\leq \delta(G)$ and $k(G)=0$ if and only if $G$ is disconnected
or $G=K_1$.

An interconnection network is often modeled as a graph $G=(V(G),
E(G))$, where $V(G)$ is the set of processors and $E(G)$ is the set
of communication links in the network. The connectivity is an
important measure of the stability of any network and gives the
minimum cost to disrupt the fail at the same time. For more details we refere the reder to [2].

A vertex-cut $S$ of the graph $G$ is called {\it $k_{1}$-vertex-cut} if
$G-S$ contains no isolated vertices. The {\it $k_{1}$-connectivity} of the
graph $G$, $k_{1}(G)$, is defined as the minimum cardinality $|S|$
where $S$ is a $k_{1}$-vertex-cut of $G$. The $k_{1}$-connectivity
of a graph was proposed in [9]. We write $k_{1}(G)=\infty$ if
$k_{1}(G)$ does not exist, for example, $k_{1}(K_{1,n})=\infty$.

Also, a graph $G$ is {\it super connected} if every minimum vertex-cut
isolates a vertex of $G$. The super connectivity of a graph  have
received the attention of many researchers(see, for example, [1],
[4], [9- 10], [13- 14]).

The lexicographic product of two graphs $G_{1}=(V(G_{1}), E(G_{1}))$
and
 $G_{2}=(V(G_{2}), E(G_{2}))$, which is denoted by  $G_{1}\circ G_2$, is the graph
with vertex set $V(G_{1})\times V(G_{2})$ such that two vertices
$(x_{1}, y_{1})$ and $(x_{2}, y_{2})$ are adjacent if and only if
either $x_{1}x_{2}\in E(G_{1})$ or $x_1=x_2$ and $y_{1}y_{2}\in
E(G_{2})$. According to [6], the lexicographic product of two graphs
first was defined in [5]. Note that in the sense of isomorphism  the
lexicographic product does not satisfies the commutative law. Many
graph theoretical properties  of lexicographic product of graphs
have been studied in the literature (see, for example [7- 8]).

It is easy to check that $G_{1}\circ G_2$ is connected if and only
if $G_1$ is connected. The connectivity of lexicographic product of
two  graphs has been studied in [13].

This paper is devoted to the $k_{1}$-connectivity and super
connectivity of lexicographic product graphs.  In Section 2 we  recall some
results and give some propositions of lexicographic product graphs.
Also, we gain bounds for the $k_{1}$-connectivity of lexicographic
product graphs. In particular, we determine$$k_{1}(G_{1}\circ
G_2)={\rm min}\{ k_{1}(G_1)|V(G_2)|, k(G_1\circ G_2)+|V _{0}(G_1-X)|
|V_{0}(G_2)|\}. $$ where $G_1$ is connected non-complete and  minimum is taken over all
vertex-cut sets  $X$ of $G_1$  such that
 $|V_{0}(G_1-X)|$ has minimum possible cardinality. Moreover,   We investigate the super connectivity
of lexicographic product graphs.

\section{Some properties of lexicographic product }

\begin{thm} ({\rm\cite{YX}})\ \  \label{NC3}
Let $G_1$ be a connected non-complete graph  and $G_2$ be a graph.
Then $k(G_{1}\circ G_2)=k(G_{1})|V(G_{2})|$.

  Moreover, if $G_1=K_n$ then $k(G_{1}\circ G_2)=(n-1)|V(G_{2})|+
k(G_2)$.

\end{thm}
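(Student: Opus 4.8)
The plan is to prove each equality by matching an explicit vertex-cut (for the upper bound) against a lower bound obtained by a \emph{fibre-trace} argument. For $T\subseteq V(G_1\circ G_2)$ and $x\in V(G_1)$ write $T_x=\{y\in V(G_2):(x,y)\in T\}$ for the trace of $T$ on the fibre over $x$, and set $A=\{x\in V(G_1):T_x=V(G_2)\}$. The one structural fact I would use repeatedly is that $x_1x_2\in E(G_1)$ forces $(x_1,y_1)(x_2,y_2)\in E(G_1\circ G_2)$ for \emph{all} $y_1,y_2\in V(G_2)$, whereas inside a single fibre adjacency is exactly that of $G_2$; together with the criterion recalled in Section~1 that $G_1\circ G_2$ is connected iff $G_1$ is, this controls the components of $(G_1\circ G_2)-T$.

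For the first statement I would first note that, since $G_1$ is connected and non-complete, $k(G_1)\le\delta(G_1)\le|V(G_1)|-2$, so a minimum vertex-cut $S$ of $G_1$ leaves $G_1-S$ disconnected with at least two vertices; then $(G_1\circ G_2)-(S\times V(G_2))\cong(G_1-S)\circ G_2$ is disconnected, giving $k(G_1\circ G_2)\le k(G_1)|V(G_2)|$. For the reverse inequality, let $T$ be any vertex-cut and build $A$ as above. If $G_1-A$ is disconnected or equal to $K_1$, then $A$ is a vertex-cut of $G_1$, so $|T|\ge\sum_{x\in A}|T_x|=|A|\,|V(G_2)|\ge k(G_1)|V(G_2)|$; the same bound holds at once if $A=V(G_1)$. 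The only remaining possibility is that $G_1-A$ is connected with at least two vertices: then every vertex of $V(G_1)\setminus A$ has a neighbour in $V(G_1)\setminus A$, every surviving vertex of $(G_1\circ G_2)-T$ lies over $V(G_1)\setminus A$, and by the structural fact all survivors lie in a single component. Since each $x\in V(G_1)\setminus A$ contributes at least one survivor $(x,y)$ with $y\notin T_x$, at least two vertices survive, so $(G_1\circ G_2)-T$ is connected and non-trivial, contradicting that $T$ is a vertex-cut. Hence this case is vacuous and the lower bound holds.

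For the second statement with $G_1=K_n$, for the upper bound I would fix $x_0\in V(K_n)$, take a minimum vertex-cut $S_2$ of $G_2$, and verify that $T=\big((V(K_n)\setminus\{x_0\})\times V(G_2)\big)\cup\big(\{x_0\}\times S_2\big)$ is a vertex-cut, because $(K_n\circ G_2)-T$ is the subgraph induced on $\{x_0\}\times(V(G_2)\setminus S_2)$, which is isomorphic to $G_2-S_2$ and hence disconnected or equal to $K_1$; this gives $k(K_n\circ G_2)\le(n-1)|V(G_2)|+k(G_2)$. For the lower bound, let $T$ be a vertex-cut with associated set $A$. Since any two distinct fibres of $K_n\circ G_2$ are completely joined, if $|A|\le n-2$ the same component argument as before forces $(K_n\circ G_2)-T$ connected and non-trivial, a contradiction; thus $|A|\ge n-1$. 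If $|A|=n$, then $|T|=n|V(G_2)|\ge(n-1)|V(G_2)|+k(G_2)$. If $|A|=n-1$, say $A=V(K_n)\setminus\{x_0\}$, then $(K_n\circ G_2)-T$ is induced on $\{x_0\}\times(V(G_2)\setminus T_{x_0})\cong G_2-T_{x_0}$, which must be disconnected or equal to $K_1$, so $T_{x_0}$ is a vertex-cut of $G_2$ and $|T_{x_0}|\ge k(G_2)$; hence $|T|=(n-1)|V(G_2)|+|T_{x_0}|\ge(n-1)|V(G_2)|+k(G_2)$.

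The hard part is the bookkeeping in the two lower bounds rather than any single deep idea: one must handle cleanly the degenerate cases in which a whole fibre is deleted, in which $(G_1\circ G_2)-T$ collapses to $K_1$ or to the empty graph, and, in the first statement, in which $G_1-A$ happens to be connected. Disposing of that last case via the ``every survivor lies in one component'' argument is the real crux, and it is precisely there that the hypothesis that $G_1$ is non-complete (equivalently $k(G_1)\le|V(G_1)|-2$) is needed.
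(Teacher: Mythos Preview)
Your argument is correct. Note, however, that the paper does not supply its own proof of this result: Theorem~2.1 is quoted from \cite{YX}, and the authors only \emph{refer} to that external proof later (e.g.\ ``by the proof of Theorem~2.1, there are at least $k(G_1)$ disjoint paths\ldots''). From those references one infers that the argument in \cite{YX} proceeds via Menger's theorem: for the lower bound one lifts the $k(G_1)$ internally disjoint $x_i$--$x_p$ paths in $G_1$ to $k(G_1)|V(G_2)|$ internally disjoint paths in $G_1\circ G_2$. Your fibre-trace approach is a genuinely different route: instead of building paths, you study the set $A$ of fully deleted fibres and obtain a contradiction when $G_1-A$ is connected on at least two vertices. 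Your method is more elementary in that it avoids Menger altogether; the Menger-based proof, on the other hand, produces the disjoint-path structure that the present paper subsequently invokes in the proof of Theorem~2.3.

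One small quibble with your closing commentary: you locate the use of the non-complete hypothesis in Case~2 of the lower bound, but in fact that case goes through verbatim even for $G_1=K_n$ (and indeed the inequality $|T|\ge k(G_1)|V(G_2)|$ still holds there, it is simply not tight). The place where non-completeness is genuinely needed is the \emph{upper} bound, exactly where you already used it: to ensure that a minimum vertex-cut $S$ of $G_1$ leaves $G_1-S$ disconnected rather than merely equal to $K_1$, so that $S\times V(G_2)$ really disconnects $G_1\circ G_2$.
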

 Let $G_1$ be a connected non-complete graph with $k(G_1)=n$ and a minimum vertex-cut $X=\{x_{1}, \cdot\cdot\cdot,
 x_{n}\}$. By the proof of Theorem 2.1, one conclude that if $V(G_{2})=\{y_{1}, \cdot\cdot\cdot,
 y_{m}\}$, then $$\bar{X}=\{(x_1, y_{1}),\cdot\cdot\cdot, (x_1, y_{m}),
  (x_2, y_{1}),\cdots (x_2, y_m), (x_n, y_{1}), \cdots, (x_n, y_{m}) \}$$ is a minimum vertex-cut in $G_{1}\circ G_{2}$.
  We use these notations for the rest.

 Now, we investigate $k_1$-connectivity of $G_{1}\circ
 G_{2}$.

\begin{thm}
Let $G_1$  be a connected non-complete graph. If $k_{1}(G_1)=k(G_1)$
then $k_{1}(G_{1}\circ G_2)=k(G_{1}\circ G_2) $.
\end{thm}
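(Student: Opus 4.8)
The plan is to prove the two inequalities $k_1(G_1\circ G_2)\ge k(G_1\circ G_2)$ and $k_1(G_1\circ G_2)\le k(G_1\circ G_2)$ separately. The first holds for trivial reasons: every $k_1$-vertex-cut is in particular a vertex-cut, so $k_1(H)\ge k(H)$ whenever $k_1(H)$ is finite, and the finiteness of $k_1(G_1\circ G_2)$ will itself be a byproduct of the construction below. Thus the whole content lies in the reverse inequality, i.e.\ in exhibiting a $k_1$-vertex-cut of $G_1\circ G_2$ of cardinality exactly $k(G_1\circ G_2)$. By Theorem~\ref{NC3}, and since $G_1$ is connected non-complete, that target value is $k(G_1)\,|V(G_2)|$.

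Since $k_1(G_1)=k(G_1)=:n$, there is a vertex-cut $X$ of $G_1$ with $|X|=n$ and $G_1-X$ having no isolated vertices. I would take $\bar X:=X\times V(G_2)$, of size $n\,|V(G_2)|$, as the candidate. The first observation is the structural identity: the subgraph of $G_1\circ G_2$ induced on $(V(G_1)\setminus X)\times V(G_2)$ is exactly $(G_1-X)\circ G_2$, which is immediate from the definition of the lexicographic product, since adjacency of two vertices with first coordinates outside $X$ only sees the induced subgraph $G_1-X$ together with all of $G_2$.

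Next I would verify that $\bar X$ is a genuine vertex-cut of $G_1\circ G_2$. Because $G_1$ is connected and non-complete, $k(G_1)\le |V(G_1)|-2$, so $G_1-X$ has at least two vertices; as $X$ is a vertex-cut, $G_1-X$ is therefore disconnected, hence $(G_1-X)\circ G_2$ is disconnected (a lexicographic product $H\circ G_2$ is connected only if $H$ is) and has more than one vertex, so removing $\bar X$ indeed disconnects $G_1\circ G_2$. The key step is then that $(G_1-X)\circ G_2$ has no isolated vertices: here one uses the characterization that $(x,y)$ is isolated in $H\circ G_2$ if and only if $x$ is isolated in $H$ and $y$ is isolated in $G_2$ --- since any neighbour of $x$ in $H$, paired with an arbitrary second coordinate, is already a neighbour of $(x,y)$. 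As $G_1-X$ has no isolated vertices, neither does $(G_1-X)\circ G_2$. Hence $\bar X$ is a $k_1$-vertex-cut, which gives $k_1(G_1\circ G_2)\le |\bar X|=n\,|V(G_2)|=k(G_1\circ G_2)$, and together with the first paragraph this proves equality.

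I do not expect any serious obstacle; the argument is short and reduces to elementary structural bookkeeping about the lexicographic product. The one point that needs a little care is ensuring that $\bar X$ does not merely trivialize $G_1\circ G_2$ but truly disconnects it --- which is exactly what the bound $k(G_1)\le|V(G_1)|-2$ for non-complete $G_1$, together with the fact that the absence of isolated vertices forces each component of $G_1-X$ to have at least two vertices, are there to guarantee.
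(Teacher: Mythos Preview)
Your proof is correct and follows essentially the same approach as the paper: take a minimum vertex-cut $X$ of $G_1$ that is simultaneously a $k_1$-vertex-cut, lift it to $\bar X = X\times V(G_2)$, and observe that $\bar X$ is both a minimum vertex-cut of $G_1\circ G_2$ (by Theorem~\ref{NC3}) and a $k_1$-vertex-cut because $G_1-X$ has no isolated vertices. You spell out in more detail why $(G_1\circ G_2)-\bar X=(G_1-X)\circ G_2$, why it is genuinely disconnected, and why it has no isolated vertices, but the underlying argument is the same.
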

\begin{proof}
Since $k_{1}(G_1)=k(G_1)$, there exists a minimum vertex-cut of
$G_1$ which is a $k_{1}$-vertex-cut of it. Let $X$ be both minimum
vertex-cut and $k_{1}$-vertex-cut of $G_{1}$. By Theorem 2.1,
 $\bar{X}$ is a minimum vertex-cut of $G_{1}\circ G_2$. Also, each component of $G_{1}-X$ has no isolated vertex. So, each
component of $(G_{1}\circ G_2)-\bar{X}$ has no isolated vertex, that
is, $\bar{X}$ is a $k_{1}$-vertex-cut of $G_{1}\circ G_2$, as needed.

\end{proof}
\begin{thm}
Let $G_1$  be a connected  non-complete graph. If $k(G_1)<
k_{1}(G_1)< \infty$ then
$$k_{1}(G_{1}\circ G_2)={\rm min}\{ k_{1}(G_1)|V(G_2)|, k(G_1\circ
G_2)+|V_{0}(G_1-X)| |V_{0}(G_2)|\} $$ where  minimum is taken over all
vertex-cut sets  $X$ of $G_1$  such that
 $|V_{0}(G_1-X)|$ has minimum possible
cardinality.
\end{thm}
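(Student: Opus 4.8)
The plan is to establish the two inequalities $k_1(G_1\circ G_2)\le M$ and $k_1(G_1\circ G_2)\ge M$, where $M$ denotes the claimed minimum. Write $m$ for the common value $|V_0(G_1-X)|$ in the statement, i.e.\ the least number of isolated vertices a \emph{minimum} vertex-cut of $G_1$ can leave behind; since $k(G_1)<k_1(G_1)$, no minimum vertex-cut of $G_1$ is a $k_1$-vertex-cut, so $m\ge 1$. For $S\subseteq V(G_1)\times V(G_2)$ and $x\in V(G_1)$ put $S_x=\{y:(x,y)\in S\}$. I will use repeatedly that $(x,y)$ is isolated in $(G_1\circ G_2)-S$ precisely when $x$ is isolated in the corresponding subgraph of $G_1$ and $y$ is isolated in $G_2-S_x$.

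For the upper bound I exhibit two $k_1$-vertex-cuts. First, pick a minimum $k_1$-vertex-cut $Y$ of $G_1$: then $\bar Y=Y\times V(G_2)$ is a vertex-cut of $G_1\circ G_2$ with $(G_1\circ G_2)-\bar Y\cong(G_1-Y)\circ G_2$, which is disconnected and has no isolated vertex because $G_1-Y$ has none; hence $k_1(G_1\circ G_2)\le k_1(G_1)|V(G_2)|$. Second, pick a minimum vertex-cut $X$ of $G_1$ realising $m$ and set $T=\bar X\cup\bigl(V_0(G_1-X)\times V_0(G_2)\bigr)$, a set of size $k(G_1)|V(G_2)|+m|V_0(G_2)|=k(G_1\circ G_2)+m|V_0(G_2)|$ by Theorem~2.1. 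The isolated vertices of $(G_1\circ G_2)-\bar X\cong(G_1-X)\circ G_2$ are exactly $V_0(G_1-X)\times V_0(G_2)$, so $(G_1\circ G_2)-T$ has none; and it is still disconnected, since each non-trivial component $C$ of $G_1-X$ survives as the connected graph $C\circ G_2$ while, if $G_2$ has an edge, every column over a vertex of $V_0(G_1-X)$ contributes a further non-empty piece — so at least two pieces remain unless $G_2$ is edgeless and $G_1-X$ has a single non-trivial component. That last configuration is absorbed into the first cut once one knows $k_1(G_1)\le k(G_1)+m$, which is the lemma flagged below.

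For the lower bound let $S$ be a minimum $k_1$-vertex-cut of $G_1\circ G_2$ and split $V(G_1)=A\sqcup B\sqcup C$ according as $S_x$ is all of $V(G_2)$, empty, or proper and non-empty. The key local observation is: if $x,x'\notin A$ and $xx'\in E(G_1)$, then in $(G_1\circ G_2)-S$ every surviving vertex of column $x$ is joined to every surviving vertex of column $x'$, and both columns have surviving vertices. Propagating this along a spanning tree of $G_1-A$, if $G_1-A$ is connected with at least two vertices then $(G_1\circ G_2)-S$ is connected, contradicting that $S$ is a vertex-cut; and if $|V(G_1)\setminus A|\le 1$ then $|S|\ge(|V(G_1)|-1)|V(G_2)|\ge k_1(G_1)|V(G_2)|\ge M$. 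So $A$ is a vertex-cut of $G_1$, whence $|A|\ge k(G_1)$. Moreover, for every $x$ isolated in $G_1-A$ the graph $G_2-S_x$ is a union of components of $(G_1\circ G_2)-S$ with no isolated vertex, forcing $V_0(G_2)\subseteq S_x$; as these $x$ lie outside $A$, we get $|S|\ge|A|\,|V(G_2)|+|V_0(G_1-A)|\,|V_0(G_2)|$. It remains to see the right-hand side is $\ge M$: if $V_0(G_1-A)=\emptyset$ then $A$ is a $k_1$-vertex-cut and the bound is $\ge k_1(G_1)|V(G_2)|$; if $|A|=k(G_1)$ then $|V_0(G_1-A)|\ge m$ and the bound is $\ge k(G_1\circ G_2)+m|V_0(G_2)|$; and if $|A|\ge k(G_1)+1$ with $|V_0(G_1-A)|\ge1$ one argues separately, using $|V_0(G_2)|\le|V(G_2)|$, the inequality $k_1(G_1)\le k(G_1)+m$, and a control on how many isolated vertices a vertex-cut of size $k(G_1)+1$ must leave.

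The genuine obstacle, pervading both halves, is a structural analysis of vertex-cuts of $G_1$ — chiefly the inequality $k_1(G_1)\le k(G_1)+m$ valid under $k(G_1)<k_1(G_1)<\infty$ (equivalently, that $X\cup V_0(G_1-X)$, or a minor modification of it, is a $k_1$-vertex-cut), together with the companion fact bounding below the number of isolated vertices forced by any vertex-cut of size $k(G_1)+1$. That something of this kind is unavoidable is clear: specialising the theorem to $G_2=K_1$ already asserts $k_1(G_1)=\min\{k_1(G_1),\,k(G_1)+m\}$. I would isolate these as lemmas about $G_1$ alone; granted them, the delicate upper-bound configuration and the $|A|\ge k(G_1)+1$ case of the lower bound both become routine, and what is left is the mechanical check that, whatever the relation between $|V(G_2)|$ and $|V_0(G_2)|$, one of the two exhibited cuts attains the minimum.
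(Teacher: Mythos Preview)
Your overall architecture is sound, and the lower-bound argument via the column partition $V(G_1)=A\sqcup B\sqcup C$ is cleaner than the paper's. For the upper bound the paper builds the same two witnesses you do ($\bar S$ from a minimum $k_1$-cut of $G_1$, and $\tilde X$ from a minimum vertex-cut of $G_1$ augmented by the isolated vertices in the relevant columns). For the lower bound the paper proceeds differently: it fixes $(x_i,y_j)$ and $(x_p,y_q)$ in distinct components of $(G_1\circ G_2)-A$, splits on $x_i\ne x_p$ versus $x_i=x_p$, and in the former case invokes $k(G_1)$ internally disjoint $x_i$--$x_p$ paths in $G_1$ to force $k(G_1)$ full columns into $A$, concluding in each case that $|A|\ge|\tilde X|$. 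Your reduction to ``the set of fully deleted columns is itself a vertex-cut of $G_1$'' is more transparent and sidesteps the Menger appeal.

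The gap you flag --- the inequality $k_1(G_1)\le k(G_1)+m$ and the companion control on isolated vertices left by non-minimum cuts of $G_1$ --- is genuine, and as you note it is exactly the $G_2=K_1$ specialisation of the theorem. The paper does \emph{not} supply this ingredient either: in its Case~1 it writes $|A|=k(G_1)|V(G_2)|+l\,|V_0(G_2)|\ge|\tilde X|$ with no argument that $l\ge m$ (indeed, the set $\{x_d\}$ obtained from the Menger paths need not be a minimum vertex-cut of $G_1$ at all), and Case~2 is asserted in the same summary fashion. So the missing lemma is a lacuna shared by both your proposal and the paper's own proof; your write-up has the merit of isolating it explicitly rather than sliding past it.
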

\begin{proof}
Let $V(G_1)=\{x_1, \cdot \cdot \cdot , x_n\}$, $V(G_2)=\{y_1, \cdot
\cdot \cdot, y_m\}$ and $V_0(G_2)=\{\acute{y}_1, \cdot \cdot \cdot,
\acute{y}_t\}$, where $t \geq 0$.
 Let $X=\{\acute{x}_1, \cdot
\cdot \cdot, \acute{x}_k\}$  be a minimum vertex-cut  of $G_1$ that
has the minimum number of neighbors $\{\alpha\}$ with
$N_{G}(\alpha)\subseteq X$. Let $S=\{\grave{x}_1, \cdot \cdot \cdot,
\grave{x}_r\}$ be a minimum $k_1$-vertex-cut of $G_1$. Now if
$\{\breve{x}_1, \cdot \cdot \cdot, \breve{x}_f\}$ be such that
$N_{G_{1}}(\breve{x}_{i})\subseteq X$, then $$\tilde{X}=\{
(\breve{x}_1, \acute{y}_1), \cdot \cdot \cdot, (\breve{x}_1,
\acute{y}_t), \cdot \cdot \cdot, (\breve{x}_f, \acute{y}_1),
\cdot\cdot \cdot, (\breve{x}_f, \acute{y}_t), (\acute{x}_1,
y_1),\cdot\cdot\cdot, (\acute{x}_1, y_m), \cdot\cdot\cdot,
(\acute{x}_k, y_1), \cdot\cdot\cdot, (\acute{x}_k, y_m) \}$$ is an
$k_1$-vertex-cut in $G_1\circ G_2$. Also
$$\bar{S}=\{(\grave{x}_1, y_1), \cdot \cdot \cdot, (\grave{x}_1,
y_m), \cdot \cdot \cdot, (\grave{x}_r, y_1), \cdot \cdot \cdot,
(\grave{x}_r, y_m)\}$$ is an $k_1$-vertex-cut in $G_1\circ G_2$.
Hence, $$k_{1}(G_{1}\circ G_2)\leq {\rm min}\{ k_{1}(G_1)|V(G_2)|,
k(G_1\circ G_2)+|V_{0}(G_1-X)| |V_{0}(G_2)|\}.$$ We show that
$k_{1}(G_{1}\circ G_2)= {\rm min}\{ k_{1}(G_1)|V(G_2)|, k(G_1\circ
G_2)+|V_{0}(G_1-X)| |V_{0}(G_2)|\}.$

Now, let $A$ be a $k_1$-vertex-cut in $G_1\circ G_2$. So $(G_1\circ
G_2)-A$ has at least two components, say,  $C_1$ and $C_2$. Let
$(x_i, y_j)\in C_1$ and $(x_p, y_q)\in C_2$. We have two
cases:\\{\bf Case 1.} Let $x_i\neq x_p$. Clearly, $x_ix_p\not\in
E(G_1)$. Now, by the proof of Theorem 2.1, there are at least
$k(G_1)$ disjoint paths, $P_1, P_2, \cdot\cdot\cdot, P_{k(G_{1})}$
between $x_i$ and $x_p$ in $G_{1}$. So we must
 choose at least one vertex $x_d$ of each path $P_{d}$ and put $\{(x_d, y_j) \mid d\in \mathbb{Z}_{k(G_{1})}, j\in \mathbb{Z}_{m}\}$
 in $A$. Let the number of isolated vertices in $G_1-\{x_1,
\cdot\cdot\cdot, x_{k(G_{1})}\}$ is $l$. note that  $l\geq 0$. Thus
the number of isolated vertices in $(G_1\circ G_2)-\{(x_d, y_j) \mid
d\in \mathbb{Z}_{k(G_{1})}, j\in \mathbb{Z}_{m}\}$ is
$l|V_{0}(G_{2})|$. In the remaining graph we put all isolated
vertices in $A$. Hence $|A|=k(G_1)|V(G_2)|+l|V_{0}(G_{2})|\geq
|\tilde{X}|$ .\\{\bf Case 2.}  Let $x_i= x_p$. So $y_jy_q\not\in
E(G_2)$ and $G_2$ is not totally disconnected. Now $\{(x_u, y_v)
\mid v\in \mathbb{Z}_{m}\}\subset A$ for every $x_u\in
N_{G_{1}}(x_i)$. There exist $y_l, y_{\acute{l}}$ such that $y_jy_l\in
E(G_2)$, $y_qy_{\acute{l}} \in E(G_2)$ and $y_ly_{\acute{l}}\not\in
E(G_2)$. Let there are $t\geq 0$ paths between $y_l$ and
$y_{\acute{l}}$. There is $\acute{t} \leq t$ such that $\{(x_i, y_{z})
\mid z\in \mathbb{Z}_{\acute{t}}\} \subset A$.  Hence in this
case $|A|\geq \delta(G_1)|V(G_2)|+|V_{0}(G_{2})|+\acute{t}\geq |\tilde{X}|$.\\
Therefore $$k_{1}(G_{1}\circ G_2)={\rm min }\{ k_{1}(G_1)|V(G_2)|,
k(G_1\circ G_2)+|V_{0}(G_1-X)| |V_{0}(G_2)|\}.$$
\end{proof}
 By the proof of above theorem we have the following corollary.
\begin{cor}
Let $G_1$  be a connected non-complete graph. If $k_{1}(G_1)=\infty$ then
$$k_{1}(G_{1}\circ G_2)=k(G_{1}\circ G_2)+|V_{0}(G_1-X)|
|V_{0}(G_2)|$$  where  minimum is taken over all
vertex-cut sets  $X$ of $G_1$  such that
 $|V_{0}(G_1-X)|$ has minimum possible cardinality. Also, if  $|V_{0}(G_2)|=0$ then $k_{1}(G_{1}\circ
G_2)=k(G_{1}\circ G_2)$.

\end{cor}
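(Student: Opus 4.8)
The plan is to obtain the corollary as a direct specialization of Theorem 2.3, so most of the work is simply observing what happens to its formula when $k_1(G_1)=\infty$. Recall that $k_1(G_1)=\infty$ means $G_1$ has no $k_1$-vertex-cut at all: every vertex-cut of $G_1$, when removed, leaves at least one isolated vertex. First I would note that the term $k_1(G_1)|V(G_2)|$ appearing in the minimum in Theorem 2.3 is now $\infty \cdot |V(G_2)| = \infty$, so the minimum is automatically achieved by the second term. Hence, provided the construction of the $k_1$-vertex-cut $\bar S$ in $G_1\circ G_2$ from a $k_1$-vertex-cut $S$ of $G_1$ is no longer available (indeed $S$ does not exist), one must reexamine whether $k_1(G_1\circ G_2)$ is even finite. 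So the first real step is to exhibit a $k_1$-vertex-cut of $G_1\circ G_2$: take a minimum vertex-cut $X=\{\acute x_1,\dots,\acute x_k\}$ of $G_1$ chosen so that $|V_0(G_1-X)|$ is as small as possible, let $\{\breve x_1,\dots,\breve x_f\}$ be the vertices isolated in $G_1-X$ (so $f=|V_0(G_1-X)|$), and form $\tilde X$ exactly as in the proof of Theorem 2.3. The verification that $\tilde X$ is a $k_1$-vertex-cut of $G_1\circ G_2$ is identical to the one there: deleting the $X$-layers disconnects $G_1\circ G_2$, and deleting, within each isolated-vertex layer $\{\breve x_i\}\times V(G_2)$, only the copies of $V_0(G_2)$ kills precisely the vertices that would otherwise be isolated. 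This gives $k_1(G_1\circ G_2)\le k(G_1\circ G_2)+|V_0(G_1-X)||V_0(G_2)|$, so in particular $k_1(G_1\circ G_2)<\infty$.

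For the reverse inequality I would reuse the two-case analysis from the proof of Theorem 2.3 verbatim. Let $A$ be any $k_1$-vertex-cut of $G_1\circ G_2$ with $(G_1\circ G_2)-A$ having components $C_1,C_2$, and pick $(x_i,y_j)\in C_1$, $(x_p,y_q)\in C_2$. In Case 1 ($x_i\ne x_p$), $A$ must contain a full $V(G_2)$-layer over some vertex-cut of $G_1$ separating $x_i$ from $x_p$, and then, to avoid isolated vertices in the quotient, it must additionally contain the $V_0(G_2)$-copies over each vertex that this cut isolates in $G_1$; since our $X$ was chosen to minimize the number of such isolated vertices, $|A|\ge k(G_1)|V(G_2)|+|V_0(G_1-X)||V_0(G_2)|=k(G_1\circ G_2)+|V_0(G_1-X)||V_0(G_2)|$ by Theorem 2.1. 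In Case 2 ($x_i=x_p$), one argues as before that $|A|\ge \delta(G_1)|V(G_2)|+|V_0(G_2)|\ge k(G_1)|V(G_2)|$, which (using $k_1(G_1)=\infty$, so in particular $\delta(G_1)\ge k(G_1)$ and the layer over a neighbour of $x_i$ contributes the extra $|V_0(G_2)|$ worth of potential isolated vertices) again dominates the claimed bound. Combining the two directions yields the displayed equality.

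Finally, the last sentence of the corollary is immediate: if $|V_0(G_2)|=0$ then the correction term $|V_0(G_1-X)||V_0(G_2)|$ vanishes, leaving $k_1(G_1\circ G_2)=k(G_1\circ G_2)$.

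I expect the main obstacle to be purely expository rather than mathematical: the statement as written says ``where minimum is taken over all vertex-cut sets $X$ of $G_1$'' even though no minimum sign appears in the formula, so I would interpret this (as in Theorem 2.3) as the instruction that $X$ is chosen among minimum vertex-cuts of $G_1$ so as to minimize $|V_0(G_1-X)|$, and I would make that reading explicit. The only genuine point requiring care is confirming that the Case 2 lower bound really does exceed $k(G_1\circ G_2)+|V_0(G_1-X)||V_0(G_2)|$ in the regime $k_1(G_1)=\infty$; here one uses that a minimum vertex-cut $X$ with a single isolated vertex typically exists (e.g. the closed-neighbourhood-type cut around a vertex realizing the relevant structure), so $|V_0(G_1-X)|$ is small, while the Case 2 cut is forced to carry a whole layer over a vertex of degree $\ge\delta(G_1)$ plus a $V_0(G_2)$-layer, comfortably absorbing the correction term.
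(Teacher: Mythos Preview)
Your approach is essentially identical to the paper's: the paper proves this corollary in a single line, ``By the proof of above theorem we have the following corollary,'' and what you have written is precisely an unpacking of that sentence---you specialize the $\tilde X$ construction and the two-case lower-bound analysis from Theorem~2.3 to the situation $k_1(G_1)=\infty$. The concern you flag at the end about whether the Case~2 bound truly dominates $k(G_1\circ G_2)+|V_0(G_1-X)||V_0(G_2)|$ is a fair worry, but it is a worry about the paper's own proof of Theorem~2.3 (where the inequality $\delta(G_1)|V(G_2)|+|V_0(G_2)|+\acute t\ge |\tilde X|$ is asserted without further justification), not something peculiar to your specialization.
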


It has been shown that a super connected network is most reliable
and has the smallest vertex failure rate among all the networks with
the same connectivity(see, for example, [11, 12]). In this section
we investigate when $G_{1}\circ G_2$ is super connected.
\begin{thm}
Let $G_1$  be a connected non-complete graph. \\1) If  $G_2$ is
connected then $G_{1}\circ G_2$ is not super connected. \\2) If
$G_2$ is disconnected and $|V_{0}(G_2)|=0$ then $G_{1}\circ G_2$ is
not super connected.\\3) If  $G_2$ is disconnected  with
$|V_{0}(G_2)|\geq 1$ and $G_1$ is super connected, then $G_{1}\circ
G_2$ is super connected.
\end{thm}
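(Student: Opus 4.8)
\emph{Proof proposal.} For parts (1) and (2) the plan is to produce a single minimum vertex-cut of $G_1\circ G_2$ that does not isolate a vertex. Take a minimum vertex-cut $X$ of $G_1$; by Theorem 2.1 and the remark following it, $\bar X=X\times V(G_2)$ is a minimum vertex-cut of $G_1\circ G_2$, and $(G_1\circ G_2)-\bar X=(G_1-X)\circ G_2$. A vertex $(u,v)$ of this graph is isolated precisely when $u$ is isolated in $G_1-X$ \emph{and} $v\in V_0(G_2)$, since the neighbours of $(u,v)$ are the $(u',v')$ with $uu'\in E(G_1-X)$ together with the $(u,v'')$ with $vv''\in E(G_2)$. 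In case (1), $G_2$ is connected with at least two vertices, so $V_0(G_2)=\emptyset$; in case (2) we have $V_0(G_2)=\emptyset$ by hypothesis. In both cases $(G_1\circ G_2)-\bar X$ has no isolated vertex (and it is genuinely disconnected, not $K_1$, because $|V(G_2)|\ge 2$ and $G_1-X$ has at least two vertices), so $\bar X$ witnesses that $G_1\circ G_2$ is not super connected.

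For part (3), let $A$ be an arbitrary minimum vertex-cut of $G_1\circ G_2$, so $|A|=k(G_1)|V(G_2)|$ by Theorem 2.1; the goal is to show $A$ isolates a vertex. For $x\in V(G_1)$ set $A_x=\{y\in V(G_2):(x,y)\in A\}$ and $B=\{x\in V(G_1):A_x=V(G_2)\}$, so $|A|\ge |B|\,|V(G_2)|$; write $H=(G_1\circ G_2)-A$. \textbf{First suppose} some two vertices of $H$ lying in distinct components of $H$ occur in a common fibre $\{x\}\times V(G_2)$. If a neighbour $x''$ of $x$ in $G_1$ had $A_{x''}\ne V(G_2)$, then any $(x'',y'')\notin A$ would be adjacent in $H$ to both members of the pair, merging the two components; hence $N_{G_1}(x)\subseteq B$, and $|A|\ge |N_{G_1}(x)|\,|V(G_2)|+|A_x|\ge \delta(G_1)|V(G_2)|\ge k(G_1)|V(G_2)|=|A|$ forces $d_{G_1}(x)=k(G_1)$, $A_x=\emptyset$, and $A=N_{G_1}(x)\times V(G_2)$. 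Then $N_{G_1}(x)$ is a minimum vertex-cut of $G_1$ which isolates $x$, and choosing $v\in V_0(G_2)$ (possible since $|V_0(G_2)|\ge 1$), the vertex $(x,v)$ is isolated in $H=(G_1-N_{G_1}(x))\circ G_2$. \textbf{Otherwise}, every fibre of $H$ lies inside one component of $H$; for $x\notin B$ let $\kappa(x)$ denote that component. I would check that $\kappa$ is constant along edges of $G_1-B$ (if $x,x'\notin B$ with $xx'\in E(G_1)$, pick $y\notin A_x$, $y'\notin A_{x'}$, so $(x,y)\sim(x',y')$ in $H$), hence constant on each component of $G_1-B$, and injective on those components (an $H$-path projects to a walk in $G_1-B$). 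So the components of $H$ correspond bijectively to those of $G_1-B$; since $H$ is disconnected, $B$ is a vertex-cut of $G_1$, and $|A|\ge|B|\,|V(G_2)|\ge k(G_1)|V(G_2)|=|A|$ gives $|B|=k(G_1)$ and $A_x=\emptyset$ for $x\notin B$, i.e.\ $B$ is a minimum vertex-cut of $G_1$ and $A=B\times V(G_2)$. As $G_1$ is super connected, $B$ isolates some $x_0\in V(G_1)$; choosing $v\in V_0(G_2)$ again, $(x_0,v)$ is isolated in $H=(G_1-B)\circ G_2$. In every case $A$ isolates a vertex, so $G_1\circ G_2$ is super connected.

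The only step requiring genuine care is the structural analysis in part (3) when no fibre of $H$ is split: one must force $A$ into the normal form $B\times V(G_2)$ with $B$ a minimum vertex-cut of $G_1$, and the crux is verifying that $\kappa$ induces a bijection between the components of $H$ and those of $G_1-B$. Everything else is mechanical: the explicit cut $\bar X$ settles (1) and (2), and the cardinality inequalities are routine; once the normal form is in hand, the hypotheses ``$G_1$ super connected'' and ``$|V_0(G_2)|\ge 1$'' finish the argument immediately.
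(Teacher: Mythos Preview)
Your argument is correct and follows the same approach as the paper: for (1) and (2) you exhibit the cut $\bar X=X\times V(G_2)$ and check directly that it leaves no isolated vertex when $V_0(G_2)=\emptyset$ (the paper does the same via an appeal to Corollary~2.4), and for (3) your split-fibre/no-split-fibre dichotomy is exactly the case analysis in the proof of Theorem~2.3 to which the paper simply defers. Your treatment of (3) is considerably more detailed and self-contained than the paper's one-line reference---in particular you make explicit the normal form $A=B\times V(G_2)$ with $B$ a minimum vertex-cut of $G_1$ and the bijection between components of $H$ and of $G_1-B$---but the underlying strategy is identical.
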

\begin{proof}
1) By Corollary 2.4, if $X$ is a minimum vertex-cut of $G_1$ then
$\bar{X}$ is a minimum $k_1$-vertex-cut of $G_1\circ G_{2}$. That
is, $\bar{X}$ can not isolate any vertex of $G_1\circ G_{2}$. Hence
$G_{1}\circ G_2$ is not super connected.\\
 (2) and (3) are hold by the proof of Theorem 2.3.
\end{proof}
The following example shows that in part 3 of Theorem 3.1, if $G_1$
is not super connected then $G_{1}\circ G_2$  is not super connected
as well.
Consider the following graph for $G_1$.

$$
\xygraph{
 !{<0cm, 0cm>;<1cm, 0cm>:<0cm, 1cm>::}
 !{(3, 2.7)}*+{\underset {x_1} \bullet}="x_1"
 !{(2,0)}*+{\underset {x_3} \bullet}="x_3"
 !{(4,0)}*+{\underset {x_4} \bullet}="x_4"
 !{(3,1.5)}*+{\underset {x_2} \bullet}="x_2"
 !{(5,2.7)}*+{\underset {x_5} \bullet}="x_5"
  "x_2" -@/^/@[red]"x_4"
  "x_2" -@/_/@[red]"x_5"
  "x_2"-@/_/@[red]"x_1"
  "x_2"-@/_/@[red]"x_3"
  "x_1"-@/_/@[red]"x_3"
   "x_5"-@/_/@[blue]"x_4"
  }
$$
Clearly $G_1$ is not super connected. Let $V(G_2)=\{y_1, y_2, y_3\}$
where $y_1y_2\in E(G_2)$ and $y_3$ is an isolated vertex. Hence
$\{(x_2, y_1), (x_2, y_2), (x_2, y_3)\}$ is a minimum vertex-cut in
$G_1\circ G_{2}$  and isolates no vertex.




\end{document}